\numberwithin{equation}{section}
\theoremstyle{plain}
\newtheorem{thm}{Theorem}[section]
\newtheorem{prop}[thm]{Proposition}
\newtheorem{lem}[thm]{Lemma}
\theoremstyle{definition}
\newcommand{\real}{\mathbb{R}}
\begin{document}
\title[Distance $k$-graphs of star products]{Asymptotic spectral distributions of distance $k$-graphs of star product graphs.}

\author{Octavio Arizmendi}
\author{Tulio Gaxiola}

\address{Research Center for Mathematics, CIMAT, Aparatado Postal 402, Guanajuato, GTO, 36240, Mexico}

\date{\today}
\maketitle

\begin{abstract}
Let $G$ be a finite connected graph and let $G^{[\star N,k]}$ be the distance $k$-graph of the $N$-fold star power of $G$. For a fixed $k\geq1$, we show that the large $N$ limit of the spectral distribution of $G^{[\star N,k]}$ converges to a centered Bernoulli distribution, $1/2\delta_{-1}+1/2\delta_1$.  The proof is based in a fourth moment lemma for convergence to a centered Bernoulli distribution.
\end{abstract}

\section{Introduction and Statement of Results}

The interest in asymptotic aspects of growing combinatorial objects has increased in recent years.  In particular, the asymptotic spectral distribution of graphs has been studied from the quantum probabilistic point of view, see Hora \cite{HO} and Hora and Obata \cite{HO}.   Moreover, as observed in Accardi, A. Ben Ghorbal and Obata \cite{ABO}, Obata \cite{O} and Accardi, Lenczewski and Salapata \cite{ALS},   the cartesian, star, rooted and free products of graphs correspond to natural independences in non-commutative probability, see \cite{BGSh,M, S}. This has lead to state  central limit theorems for these product of graphs by reinterpreting the classical, free \cite{B, V}, Boolean \cite{SW} and monotone \cite{M2} central limit theorems.

More recently, in a series of papers \cite{Hi,HLO,K,KH,LO,O2}  the asymptotic spectral distribution of the distance $k$-graph of the $N$-fold power
of the cartesion product was studied.  These investigations, finally lead to the following theorem which generalizes the central limit theorem for cartesian products of graphs.

\begin{thm}[Hibino, Lee  and Obata \cite{HLO}] \label{HLO}
Let $G=(V,E)$ be a finite connected graph with $|V|\ge2$.
For $N\ge1$ and $k\ge1$ let $G^{[N,k]}$ be the distance $k$-graph of $G^N=G\times \cdots\times G$ 
($N$-fold Cartesian power)
and $A^{[N,k]}$ its adjacency matrix.
Then, for a fixed $k\ge1$, 
the eigenvalue distribution of $N^{-k/2}A^{[N,k]}$ converges in moments as $N\rightarrow\infty$
to the probability distribution of
\begin{equation}
\left(\frac{2|E|}{|V|}\right)^{k/2}
\frac{1}{k!}\Tilde{H}_k(g),
\end{equation}
where $\Tilde{H}_k$ is the monic Hermite polynomial of degree $k$ and $g$ is a random variable obeying the standard normal distribution $N(0,1)$.
\end{thm}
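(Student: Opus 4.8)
The plan is to run the quantum-probabilistic central limit argument, organizing all the distance matrices of $G$ into a single generating series. Write $A_r$ for the distance-$r$ matrix of $G$ (so $A_1$ is the ordinary adjacency matrix $A_G$ and $(A_r)_{xy}=1$ iff $d_G(x,y)=r$), let $\partial$ be the diameter of $G$, and put $B(z)=\sum_{r=0}^{\partial}A_rz^r\in M_{|V|}(\mathbb C)[z]$. For $1\le j\le N$ let $B^{(j)}(z)$, $A_r^{(j)}$ denote these matrices acting in the $j$-th tensor slot of $M_{|V|^N}(\mathbb C)=M_{|V|}(\mathbb C)^{\otimes N}$; they commute, and since $d_{G^N}((x_i),(y_i))=\sum_i d_G(x_i,y_i)$ one has the exact identity
\begin{equation}
\sum_{k\ge0}A^{[N,k]}z^k=\prod_{j=1}^{N}B^{(j)}(z).
\end{equation}
Throughout, $\tau_N=|V|^{-N}\mathrm{Tr}=\tau^{\otimes N}$ with $\tau=|V|^{-1}\mathrm{Tr}$ is the state whose moments give the eigenvalue distribution, and under it $A_G^{(1)},\dots,A_G^{(N)}$ are independent (tensor-independent) copies of $A_G$, with $\tau(A_G)=0$ and $\tau(A_G^2)=|V|^{-1}\sum_x\deg x=2|E|/|V|=:\sigma^2$; note also $\tau(A_2)=0$ since no vertex lies at distance $2$ from itself.

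Next I would take a formal logarithm. Since $B(0)=I$ we get $\log B(z)=A_Gz+\big(A_2-\tfrac12A_G^2\big)z^2+\sum_{m\ge3}D_mz^m$ for fixed matrices $D_m$, so with $S_N:=\sum_jA_G^{(j)}=A_{G^N}$ and $T_N:=\frac1N\sum_j\big(A_2^{(j)}-\tfrac12(A_G^{(j)})^2\big)$,
\begin{equation}
\sum_{j=1}^{N}\log B^{(j)}(z/\sqrt N)=\frac{z}{\sqrt N}\,S_N+z^2T_N+\sum_{m\ge3}\frac{z^m}{N^{m/2}}\sum_{j=1}^{N}D_m^{(j)}.
\end{equation}
The classical central limit theorem for the independent bounded summands $A_G^{(j)}$ gives $N^{-1/2}S_N\to\sigma g$ in moments with $g\sim N(0,1)$; the law of large numbers gives $T_N\to\tau\big(A_2-\tfrac12A_G^2\big)I=-\tfrac{\sigma^2}{2}I$ in moments; and $N^{-m/2}\sum_jD_m^{(j)}\to0$ for $m\ge3$ even in operator norm, since its norm is $\le N^{1-m/2}\|D_m\|$. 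Because the last two limits are deterministic, these convergences hold jointly. Exponentiating the identity of the first paragraph after $z\mapsto z/\sqrt N$, the $z^k$-coefficient is $N^{-k/2}A^{[N,k]}$ and equals a fixed universal polynomial in the (finitely many, for fixed $k$) coefficients $N^{-1/2}S_N,T_N,N^{-3/2}\sum_jD_3^{(j)},\dots$ of the series inside the exponential; hence it converges in moments to the same polynomial evaluated at $\sigma g,-\tfrac{\sigma^2}{2},0,\dots$, that is, to $[z^k]\exp\!\big(z\sigma g-\tfrac12z^2\sigma^2\big)$. Finally, from the Hermite generating function $\sum_{k\ge0}\tfrac{t^k}{k!}\tilde H_k(x)=e^{tx-t^2/2}$ with $t=z\sigma$ one reads off $[z^k]\exp\!\big(z\sigma g-\tfrac12z^2\sigma^2\big)=\tfrac{\sigma^k}{k!}\tilde H_k(g)=(2|E|/|V|)^{k/2}\tfrac1{k!}\tilde H_k(g)$, which is the claim. (For orientation, $k=2$ already shows the mechanism: $A^{[N,2]}=\tfrac12S_N^2+NT_N$, so $N^{-1}A^{[N,2]}\to\tfrac12(\sigma g)^2-\tfrac{\sigma^2}{2}=\tfrac{\sigma^2}{2}\tilde H_2(g)$.)

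The step I expect to be the real obstacle is making that last reduction rigorous: $\|N^{-1/2}S_N\|$ is not bounded as $N\to\infty$, so "plugging the limits into the polynomial" cannot be justified by norm continuity and must instead be carried out as an explicit moment computation. Concretely, one expands $\tau_N\big((N^{-k/2}A^{[N,k]})^p\big)$ into a sum of words $N^{-pk/2}\sum_{\vec{j}}\tau_N\big(D_{m_1}^{(j_1)}\cdots D_{m_S}^{(j_S)}\big)$ with total weight $\sum m_i=pk$, uses that $\tau_N$ factorizes over tensor slots and that a slot visited by a single $D_1$ contributes $\tau(D_1)=0$, and checks that the surviving configurations — each occupied slot carrying either two $D_1$'s (weight $2$, value $\tau(A_G^2)=\sigma^2$) or one $D_2$ (value $\tau(D_2)=-\sigma^2/2$) — number exactly $O(N^{pk/2})$ and reproduce the moments of $\tfrac{\sigma^k}{k!}\tilde H_k(g)$, while all other configurations are $o(1)$. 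This is the standard pair-partition bookkeeping behind the quantum central limit theorem, with the fixed weight $k$ per block forcing the $k$-th Hermite polynomial; it is routine but somewhat lengthy.
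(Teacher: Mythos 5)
This statement is Theorem \ref{HLO}, which the paper does not prove: it is quoted verbatim from Hibino--Lee--Obata \cite{HLO} as motivation for the star-product analogue, and the paper's own machinery (the fourth-moment Lemma \ref{lemma1}) is built specifically for the Bernoulli limit of Theorem \ref{T1} and could not establish a Hermite-polynomial limit, which is not determined by its first four moments in this way. So there is no in-paper proof to compare yours against, and I can only assess your argument on its own terms.

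Your strategy is sound and your key identities check out. The generating-function identity $\sum_k A^{[N,k]}z^k=\prod_j B^{(j)}(z)$ is correct because distance is additive over Cartesian factors; the formal logarithm is legitimate since $B(0)=I$ and $\exp\circ\log$ is an identity of formal power series in the single element $B(z)-I$; the values $\tau(A_G)=0$, $\tau(A_G^2)=2|E|/|V|$, $\tau(A_2)=0$ are right; the operator-norm bound $N^{1-m/2}\Vert D_m\Vert$ correctly kills all terms of order $m\geq3$; and the $k=2$ computation $A^{[N,2]}=\tfrac12S_N^2+NT_N$ is exact and reassuring. You also correctly identify the one genuinely delicate point: since $\Vert N^{-1/2}S_N\Vert$ grows, one cannot substitute limits into the polynomial by norm continuity, and the argument must be closed by the direct moment expansion. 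Your sketch of that expansion is the right one --- the factorization of $\tau_N$ over tensor slots, the vanishing of slots carrying a single $A_G$, the weight count forcing at most $pk/2$ occupied slots, and the identification of the surviving weight-$2$ configurations (pairs of $D_1$'s worth $\sigma^2$, singleton $D_2$'s worth $-\sigma^2/2$) with the Wick expansion of $[z^k]\exp(z\sigma g-\tfrac12z^2\sigma^2)=\tfrac{\sigma^k}{k!}\Tilde H_k(g)$. This is the standard quantum central limit bookkeeping and it does close the argument, but as written it remains a detailed sketch rather than a complete verification: to call it a proof you would need to actually carry out the matching of combinatorial weights (the $1/l!$ factors from the exponential against the slot-labelling count $N(N-1)\cdots$) on both sides. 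For reference, the published proof in \cite{HLO} organizes the same moment combinatorics around an asymptotic expression of $A^{[N,k]}$ as a Hermite polynomial in $A^{[N,1]}$ plus lower-order corrections; your logarithmic generating-function packaging is an equivalent and arguably cleaner way to produce that expression.
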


In this note we study the distribution (with respect to the vacuum state) of the star product of graphs. That is, we prove the analog of Theorem \ref{HLO} by changing the cartesian product by the star product.

\begin{thm}\label{T1} Let $G=(V,E,e)$ be a locally finite connected graph and let $k\in\mathbb{N}$ be such that  $G^{[k]}$ is not trivial.  For $N\ge1$ and $k\ge1$ let $G^{[\star N,k]}$ be the distance $k$-graph of $G^{\star N}=G\star \cdots\star G$  ($N$-fold star power)
and $A^{[\star N,k]}$ its adjacency matrix. Furthermore, let $\sigma=V_{e}^{\left[ k\right] }$ be the number of neighbours of $e$ in the distance $k$-graph of $G,$
then the distribution with respect to the vacuum state of $(N\sigma)^{-1/2}A^{[\star N,k]}$ converges in distribution as $N\rightarrow\infty$ to a centered Bernoulli distribution. That is,
\begin{equation*}\label{limit}
\frac{A^{\left[ \star N,k\right] }}{\sqrt{N\sigma}}
\longrightarrow \frac{1}{2} \delta _{-1}+\frac{1}{2}\delta _{1},
\end{equation*}
weakly.
\end{thm}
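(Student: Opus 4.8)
The plan is the moment method, with the combinatorics channeled through a ``fourth moment lemma'' adapted to the centered Bernoulli limit. Write $\mathcal H_N=\ell^2(V(G^{\star N}))$, let $\varphi_N(\,\cdot\,)=\langle\delta_e,\,\cdot\,\delta_e\rangle$ be the vacuum state at the root, and set $X_N=(N\sigma)^{-1/2}A^{[\star N,k]}$, so that $\varphi_N(X_N^m)$ is the $m$-th moment of the law in the statement. The centered Bernoulli distribution $\tfrac12\delta_{-1}+\tfrac12\delta_1$ is the probability measure whose odd moments vanish and whose even moments all equal $1$; since the star product of graphs realizes Boolean independence and the Boolean central limit theorem of Speicher and Woroudi turns any mean-zero, variance-one summands into $\tfrac12\delta_{-1}+\tfrac12\delta_1$, Theorem~\ref{T1} is the Boolean counterpart of Theorem~\ref{HLO}, and for $k=1$ it is literally the Boolean CLT for the adjacency matrix of $G$.

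First I would isolate the following lemma. If $(X_N)$ are self-adjoint elements of algebraic probability spaces $(\mathcal A_N,\varphi_N)$ with $\sup_N\varphi_N(X_N^{2m})<\infty$ for every $m$, and $\varphi_N(X_N)\to0$, $\varphi_N(X_N^2)\to1$, $\varphi_N(X_N^3)\to0$, $\varphi_N(X_N^4)\to1$, then $X_N$ converges in moments to $\tfrac12\delta_{-1}+\tfrac12\delta_1$. Its proof is short: from $\varphi_N\big((X_N^2-1)^2\big)=\varphi_N(X_N^4)-2\varphi_N(X_N^2)+1\to0$ we get $X_N^2\to1$ in $L^2(\varphi_N)$; then the Cauchy--Schwarz inequality for the state $\varphi_N$, together with the uniform moment bounds, yields $\varphi_N\big((X_N^2-1)P(X_N)\big)\to0$ for every fixed polynomial $P$, and an induction on $m$ propagates this to $\varphi_N(X_N^{2m})\to1$ and $\varphi_N(X_N^{2m+1})\to\lim_N\varphi_N(X_N)=0$, which are exactly the moments of the centered Bernoulli law. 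The only hypothesis that is not automatic in the application is the uniform bound $\sup_N\varphi_N(X_N^{2m})<\infty$.

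The combinatorial part identifies $\varphi_N(X_N^m)$ with $(N\sigma)^{-m/2}$ times the number of closed walks of length $m$ from $e$ in $G^{[\star N,k]}$. The geometric fact to use is that in the star power the copies of $G$ meet only at $e$, so $d_{G^{\star N}}(x,e)=d_G(x,e)$ and vertices in distinct copies are joined only through $e$; from the point of view of $e$ the distance $k$-graph then behaves like $N$ copies of the distance $k$-graph $G^{[k]}$ glued at $e$, and in particular $e$ has exactly $N\sigma$ neighbours, namely the $N\sigma$ vertices at $G$-distance $k$ from $e$ inside the $N$ copies. One computes $\varphi_N(X_N)=0$ (no loops), $\varphi_N(X_N^2)=(N\sigma)^{-1}(N\sigma)=1$, $\varphi_N(X_N^3)=(N\sigma)^{-3/2}O(N)\to0$ (a closed $3$-walk from $e$ stays in one copy), and $\varphi_N(X_N^4)=(N\sigma)^{-2}\big((N\sigma)^2+O(N)\big)\to1$, the leading term $(N\sigma)^2$ coming from the walks $e\to x\to e\to y\to e$ with $x,y$ running over the $N\sigma$ neighbours of $e$. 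A crude count of closed walks, bounding the number of copies a walk of given length can enter, supplies the uniform bound $\varphi_N(X_N^{2m})=O(1)$ (and $\varphi_N(X_N^{2m+1})\to0$), and feeding all of this into the lemma gives $\varphi_N(X_N^m)\to$ the $m$-th moment of $\tfrac12\delta_{-1}+\tfrac12\delta_1$ for every $m$; since this measure is compactly supported and the laws of the $X_N$ are tight, moment convergence upgrades to the weak convergence asserted in Theorem~\ref{T1}.

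The step I expect to require the most care is the treatment of closed walks for $k\ge2$. When $k\ge2$ the distance $k$-graph of the star power is not literally $N$ copies of $G^{[k]}$ glued at $e$: it also carries ``cross-branch'' edges between vertices of different copies lying at $G$-distances $s$ and $k-s$ from $e$. Controlling the closed walks that exploit these edges --- so that they do not disturb the leading asymptotics of the moments, and, crucially, do not break the uniform bound $\varphi_N(X_N^{2m})=O(1)$ --- is the delicate combinatorial heart of the argument; the rest, namely the fourth moment lemma, the reduction to walk-counting, the evaluation of the first four moments, and the passage from moment convergence to weak convergence, is routine.
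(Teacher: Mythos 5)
There is a genuine gap, and it sits exactly where you placed your caveat: for $k\ge 2$ the cross-branch edges cannot be controlled in the way your plan requires, because the moment method itself fails for this sequence. Your auxiliary lemma is internally correct, but its hypothesis $\sup_N\varphi_N(X_N^{2m})<\infty$ is false in general, and so is its intended conclusion that all moments converge to those of $\tfrac12\delta_{-1}+\tfrac12\delta_1$. Take $G=C_5$ with vertices $e,a,b,c,d$ in cyclic order and $k=2$, so $\sigma=2$. In $(C_5^{\star N})^{[2]}$ the $2N$ vertices $\{a_i,d_i\}_{i\le N}$ (those at $G$-distance $1$ from $e$) span a complete graph $K_{2N}$, since any two of them lie at distance $2$ in $C_5^{\star N}$, and this clique is reachable from $e$ in two steps because $e\sim b_i\sim d_i$ and $e\sim c_i\sim a_i$. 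The closed $6$-walks $e\to b_i\to d_i\to v\to d_j\to b_j\to e$ (and their $c/a$ variants), with $v$ ranging over the clique, number about $(2N)^3=(N\sigma)^3$, so $\varphi_N(X_N^6)\to 2\neq 1$; and walks that linger in the clique for $2m-4$ steps give $\varphi_N(X_N^{2m})$ of order $N^{m-3}$, which diverges for $m\ge 4$. So moment convergence to the Bernoulli law genuinely fails even though weak convergence holds, and no refinement of the walk counting can rescue a route that goes through all moments.

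The paper's argument is built to sidestep precisely this. Its fourth moment lemma assumes only $m_1=0$, $m_2=1$ and $m_4\to 1$ and concludes weak convergence directly: writing the Jacobi parameters, $m_1=0$ and $m_2=1$ give $\beta_0=0$ and $\gamma_0=1$, whence $m_4=1+\beta_1^2+\gamma_1$, so $m_4\to 1$ forces $\beta_1\to 0$ and $\gamma_1\to 0$; inserting this into the continued-fraction expansion of the Cauchy transform and using that $|G_{\nu}(z)|\le 1$ on $\{\Im z\ge 1\}$ yields $G_{\mu_N}(z)\to 1/(z-1/z)=G_{\mathbf b}(z)$ there, which is equivalent to weak convergence. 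No third moment, no higher moments, no uniform bounds are needed, and this is essential rather than a convenience. Your evaluation of the first, second and fourth moments does agree with the paper's (the $(N\sigma)^2$ leading term plus an $O(N)$ correction controlled by the structure of the cross edges), so the repair is to replace your limiting lemma by one converting fourth-moment information directly into weak convergence, for instance via the Cauchy transform as the paper does.
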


The limit distribution above  is universal in the sense that it is independent of the details of a factor $G$, but also in this case  the limit \textbf{does not} depend on $k$.  The proof of Theorem \ref{T1} is based in a fourth moment lemma for convergence to a centered Bernoulli distribution.

Apart from the introduction, this note is organized as follows. Section 2 is devoted to preliminaries. In Section 3 we prove a fourth moment lemma for convergence to a centered Bernoulli distribution. Finally in Section 4 we prove Theorem \ref{T1}.

\section{Preliminaries}

In this section we give very basic preliminaries on graphs the Cauchy transform, Jacobi parameters and non-commutative probability. The reader familiar with these objects may skip this section.

\subsection{Graphs}

A \emph{directed graph} or \emph{digraph} is a pair $G=\left( V,E\right) $ such that $V$ is a non-empty
set and $E\subseteq V\times V.$ The elements of $V$ and $E$ are called the 
\emph{vertices} and the \emph{edges }of the digraph $G,$ respectively.
Two vertices $x,$ $y\in V$ are \emph{adjacent, }or \emph{neighbours }if $%
\left( x,y\right) \in E.$

We call \emph{loop} an edge of the form $\left( v,v\right) $ and we say that
a graph is \emph{simple }if has no loops. A digraph is called \emph{undirected 
} if $\left( v,w\right) \in E$ implies $\left( w,v\right) \in E.$ 

We will work with simple undirected digraphs and use the word 
\emph{graph} for a simple undirected digraph without any further reference.

The \emph{adjacency matrix} of $G$ is the matrix indexed by the vertex set $V
$, where $A_{xy}=1$ when $\left( x,y\right) \in E$ and $A_{xy}=0$ otherwise.

A \emph{path }is a graph $P=\left( V,E\right) $ with vertex set $V=\left\{ v_{1},
\dots , v_{k}\right\} $ and edges $E=\{ ( v_{1},v_{2}),
\dots ,( v_{k-1},v_{k}) \} .$
A \emph{walk }is a path that can repeat edges. We say that a graph is \emph{%
connected } if every pair of distinct vertices $x,$\ $y\in V$ are connected
by a walk (or equivalently by a path).

In this note we focus on specific type of graphs coming from the distance $k$-graphs
of the star product of finite rooted graph. 

For a given graph $G=\left( V,E\right) $ and a positive integer $k$ the 
\emph{distance }$k$\emph{-graph }is defined to be a graph $G^{\left[ k\right]
}=\left( V,E^{\left[ k\right] }\right) $ with%
\[
E^{\left[ k\right] }=\left\{ \left( x,y\right) :x,\ y\in V,\ \partial
_{G}\left( x,y\right) =k\right\} ,
\]%
where $\partial _{G}\left( x,y\right) $ is the graph distance. Figure \ref{fig1} shows the distance $2$-graph induced by the 3 dimentional cube.

\begin{figure}[here]
\begin{center}
\includegraphics[height=3cm]{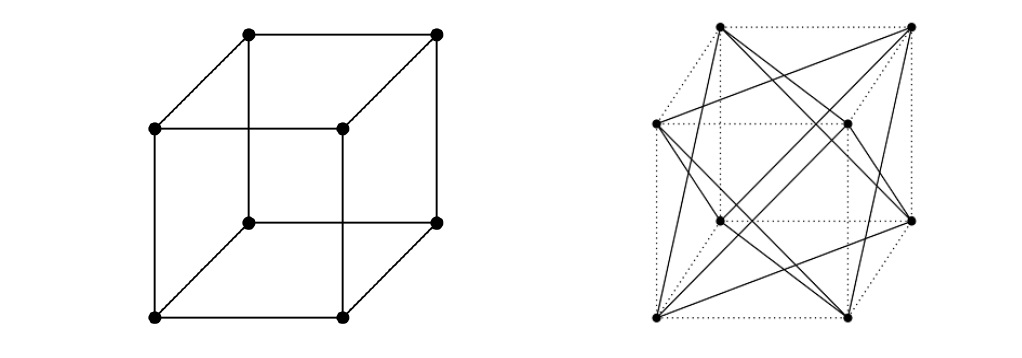}
\caption{\label{fig1}3-Cube and its distance $2$-graph}
\end{center}
\end{figure}

A  \emph{rooted} graph is a graph with a labeled vertex $o\in V$. 
Finally we define the star product of rooted graphs. For $G_{1}=\left( V_{1},E_{1}\right) $ and $%
G_{2}=\left( V_{2},E_{2}\right) $ be two graph with distinguished vertices $%
o_{1}\in V_{1}$ and $o_{2}\in V_{2},$ the \emph{star product graph }of $G_{1}
$ with $G_{2}$ is the graph $G_{1}\star G_{2}=\left( V_{1}\times
V_{2},E\right) $ such that for $\left( v_{1},w_{1}\right) ,\ \left(
v_{2},w_{2}\right) \in V_{1}\times V_{2}$ the edge $e=\left(
v_{1},w_{1}\right) \sim \left( v_{2},w_{2}\right) \in E$ if and only if one
of the following holds:%
\begin{eqnarray*}
1.\ v_{1} &=&v_{2}=o_{1}\ \text{and\ }w_{1}\sim w_{2} \\
2.\ v_{1} &\sim &v_{2}\ \text{and\ }w_{1}=w_{2}=o_{2}.
\end{eqnarray*}

As we can see, the star product is a graph obtained by gluing two graphs at
their distinguished vertices $o_{1}$ and $o_{2}.$

\begin{figure}[here]
\begin{center}
\includegraphics[height=3cm]{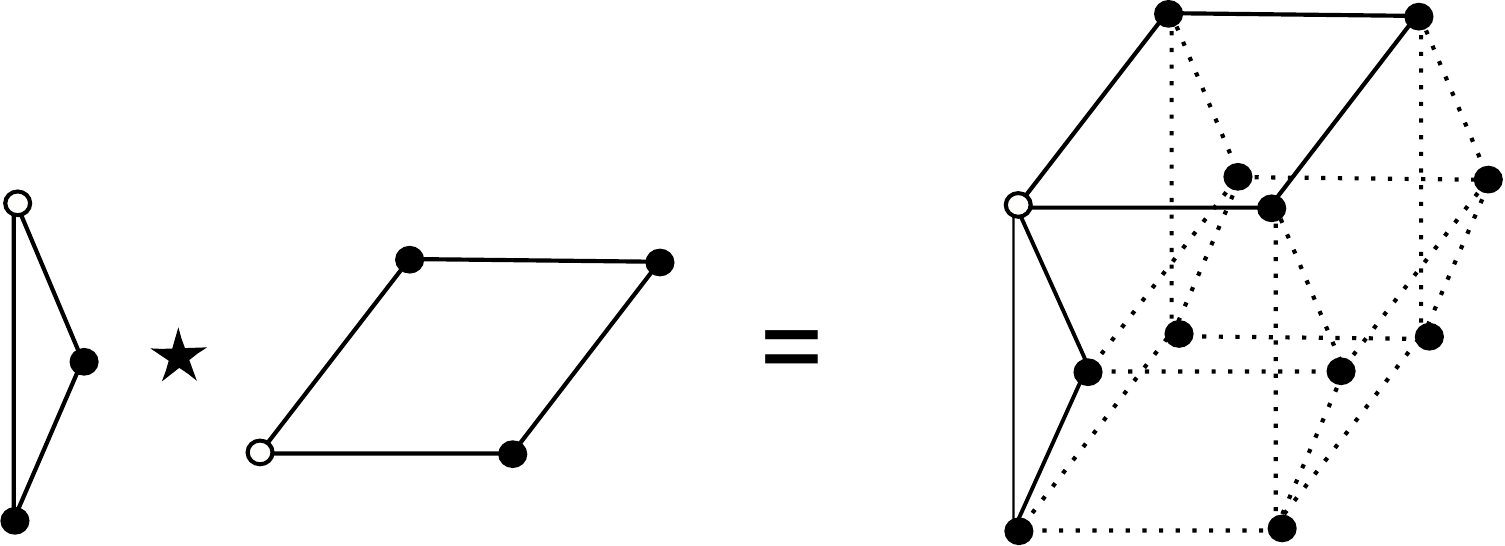}
\caption{\label{fig2}Star product of two cycles}
\end{center}
\end{figure}

\subsection{The Cauchy Transfom }

We denote by $\mathcal{M}$ the set of Borel probability measures on $\real$. The upper half-plane and the lower half-plane are respectively denoted as $\mathbb{C}^+$ and $\mathbb{C}^-$. 

For a measure $\mu \in \mathcal{M}$, the Cauchy transform $G_\mu:\mathbb{C}^+\to\mathbb{C}^-$  is defined by the integral 
$$G_\mu(z) = \int_{\real}\frac{\mu(dx)}{z-x}, ~~~~ z \in \mathbb{C}^+)$$

The Cauchy transform is an important tool in non-commutative probability. For us, the following relation between weak convergence and the Cauchy Transform will be important.

\begin{prop}
\label{WCCT}Let $\mu _{1}$ and $\mu _{2}$ be two probability measures on $%
\mathbb{R}$ and  
\begin{equation}\label{distance}
d(\mu _{1},\mu _{2})=\sup \left\{ \left\vert G_{\mu _{1}}(z)-G_{\mu
_{2}}(z)\right\vert ;\Im(z)\geq 1\right\} .
\end{equation}
Then $d$ is a distance which defines a metric for the weak topology of probability
measures. Moreover,  $|G_\mu(z)|$ is bounded in $\{z:\Im(z)\geq 1\}$ by $1$. \end{prop}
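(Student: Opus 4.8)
The plan is to establish the three assertions separately: the pointwise bound on the Cauchy transform, the fact that $d$ is a genuine metric, and that it metrizes the weak topology. The bound is the easy part: for $\Im(z)\ge1$ one has $|z-x|\ge\Im(z)\ge1$ for every $x\in\R$, so
\[
|G_\mu(z)|\le\int_\R\frac{\mu(dx)}{|z-x|}\le\frac{1}{\Im(z)}\le1,
\]
which also shows that $d(\mu_1,\mu_2)\le2$ is always finite. Symmetry of $d$ and the triangle inequality are inherited from the modulus, so $d$ is at worst a pseudometric; what needs to be checked is that $d(\mu_1,\mu_2)=0$ forces $\mu_1=\mu_2$. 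If $G_{\mu_1}$ and $G_{\mu_2}$ agree on $\{\Im z\ge1\}$, then, both being holomorphic on $\mathbb{C}^+$ and coinciding on a set with non-empty interior, they agree on all of $\mathbb{C}^+$, and the Stieltjes inversion formula recovers $\mu_1=\mu_2$.

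Next I would prove that $d(\mu_n,\mu)\to0$ implies $\mu_n\to\mu$ weakly. Restricting to the imaginary axis, for $y\ge1$ one computes
\[
-y\,\Im G_\mu(iy)=\int_\R\frac{y^2}{x^2+y^2}\,\mu(dx)\longrightarrow1\qquad(y\to\infty),
\]
since $\mu$ is a probability measure. As $d(\mu_n,\mu)\to0$ gives $G_{\mu_n}(iy)\to G_\mu(iy)$ for each fixed $y$, the corresponding integrals for $\mu_n$ can be made uniformly in $n$ close to $1$ by taking $y$ large, and a Chebyshev-type estimate on $\{|x|>R\}$ then yields tightness of $\{\mu_n\}$. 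Any subsequential weak limit $\nu$ satisfies $G_\nu(z)=\lim_k G_{\mu_{n_k}}(z)=G_\mu(z)$ on $\{\Im z\ge1\}$ (the integrand $x\mapsto(z-x)^{-1}$ being bounded and continuous), hence $\nu=\mu$ by the separation property just established, so $\mu_n\to\mu$ weakly.

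For the converse, assume $\mu_n\to\mu$ weakly. Pointwise convergence $G_{\mu_n}(z)\to G_\mu(z)$ on $\{\Im z\ge1\}$ is immediate because $x\mapsto(z-x)^{-1}$ is bounded and continuous for fixed $z\in\mathbb{C}^+$; the real content is to upgrade this to convergence uniform on the unbounded region $\{\Im z\ge1\}$. Two uniform estimates do the job: first, $|G_\mu'(z)|\le\int_\R|z-x|^{-2}\,\mu(dx)\le\Im(z)^{-2}\le1$, so the family $\{G_{\mu_n}\}$ is $1$-Lipschitz on the convex set $\{\Im z\ge1\}$, hence equicontinuous; second, splitting $\int(z-x)^{-1}\mu_n(dx)$ at $|x|=R$ and using tightness of $\{\mu_n\}$ shows $\sup_n|G_{\mu_n}(z)|\to0$ as $|z|\to\infty$ within $\{\Im z\ge1\}$. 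Pointwise convergence together with equicontinuity on a large compact rectangle (the Arzel\`a--Ascoli theorem) and uniform smallness outside that rectangle then give $\sup_{\Im z\ge1}|G_{\mu_n}(z)-G_\mu(z)|\to0$, i.e.\ $d(\mu_n,\mu)\to0$.

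The step I expect to be the main obstacle is precisely this passage from pointwise to uniform convergence of Cauchy transforms on an unbounded half-plane: without control at infinity, uniform convergence genuinely can fail, and ruling out escape of mass to infinity is exactly what forces one to invoke tightness (and, in the first half, the normalization $\int_\R\mu(dx)=1$ read off from the asymptotics of $G_\mu$ along the imaginary axis).
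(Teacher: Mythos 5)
Your argument is correct and complete, but there is nothing in the paper to compare it against: Proposition \ref{WCCT} is stated in the preliminaries without proof, as a known fact about the Cauchy transform (it is standard in the non-commutative probability literature). What you supply is therefore a genuine addition rather than an alternative route. The three pieces all check out: the bound $|G_\mu(z)|\le \Im(z)^{-1}\le 1$ is immediate from $|z-x|\ge\Im(z)$; separation follows from analytic continuation off $\{\Im z\ge 1\}$ plus Stieltjes inversion; and the equivalence with weak convergence is handled correctly in both directions. In particular you identify the two real issues and resolve both: recovering tightness of $\{\mu_n\}$ from convergence of $G_{\mu_n}$ along the imaginary axis via $-y\,\Im G_{\mu}(iy)=\int y^2/(x^2+y^2)\,\mu(dx)\to 1$ (with the tacit, harmless step of treating the finitely many small $n$ separately), and upgrading pointwise to uniform convergence on the unbounded region $\{\Im z\ge 1\}$ via the uniform Lipschitz bound $|G_{\mu_n}'(z)|\le\Im(z)^{-2}\le 1$ together with uniform decay at infinity obtained from tightness. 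One could shorten the second direction by noting that uniform convergence on compacts plus the uniform tail estimate already suffices without invoking Arzel\`a--Ascoli by name, but that is cosmetic.
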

In other words, a sequence of probability measures $\left\{ \mu
_{n}\right\} _{n\geq 1}$ on $\mathbb{R}$ converges weakly to a probability
measure $\mu $ on $\mathbb{R}$ if and only if for all $z$ with $\Im(z)\geq 1$ we have
\begin{equation*}
\lim_{n\rightarrow \infty }G_{\mu _{n}}(z)=G_{\mu }(z).
\end{equation*}

\subsection{The Jacobi Parameters}

Let $\mu$ be a probability measure with all moments, that is $m_n(\mu):=\int_{\mathbb{R}}x^{n}\mu (dx)<\infty$. The Jacobi parameters  $\gamma _{m}=\gamma _{m}(\mu )\geq 0,\beta
_{m}=\beta _{m}(\mu )\in\mathbb{R}$, are defined by the recursion 
\begin{equation*}
xP_{m}(x)=P_{m+1}(x)+\beta _{m}P_{m}(x)+\gamma _{m-1}P_{m-1}(x),
\end{equation*}%
where the polynomials  $P_{-1}(x)=0,$ $P_{0}(x)=1$ y $(P_{m})_{m\geq 0}$ is a sequence of orthogonal monic polynomials with respect to $\mu $, that is,
\begin{equation*}
\int_{\mathbb{R}}P_{m}(x)P_{n}(x)\mu (dx)=0\text{ \  \  \  \ si }m\neq n.
\end{equation*}
A measure $\mu$ is supported on $m$ points iff $\gamma_{m-1} = 0$ and $\gamma_n> 0$ for $n = 0,\dots , m-2$.

The Cauchy transform may be expressed as a continued fraction in terms of the Jacobi parameters, as follows.
\begin{equation*}
G_{\mu }(z)=\int_{-\infty }^{\infty }\frac{1}{z-t}\mu (dt)=\frac{1}{z-\beta
_{0}-\dfrac{\gamma _{0}}{z-\beta _{1}-\dfrac{\gamma _{1}}{z-\beta _{2}-\cdots}}}
\end{equation*}

An important example for this paper is the Bernoulli distribution $\mathbf{b}=1/2\delta_1+1/2\delta_1$ for which
$ \beta_0=0, \gamma_0=1,$ and  $\beta_n=\gamma_n=0$ for $n\geq1$. Thus, the Cauchy transform is given by $$G_\mathbf{b}(z)=\frac{1}{z-1/z}.$$

In the case when $\mu$ has $2n+2$-moments we can still make an orthogonalization procedure until the level $n$. In this case the Cauchy transform has the form

\begin{equation}\label{expansionjacobi}
G_{\mu }(z)=\frac{1}{z-\beta
_{0}-\dfrac{\gamma _{0}}{z-\beta _{1}-\dfrac{\gamma _{1}}{~~~\dfrac{\ddots}{ z-\beta _{n}-
\gamma _{n}G_{\nu }(z)}}}}
\end{equation}
where $\nu$ is a probability measure.

\subsection{Non-Commutative Probability Spaces}

A $C^*$\textit{-probability space} is a pair $(\mathcal{A},\varphi)$, where $\mathcal{A}$ is a unital $C^*$-algebra and $\varphi:\mathcal{A}\to\mathbb{C}$ is a positive unital linear functional. The elements of $\mathcal{A}$ are called (non-commutative) random variables. An element $a\in\mathcal{A}$ such that $a=a^*$ is called self-adjoint.

The functional $\varphi$ should be understood as the expectation in classical probability.

 For $a_1,\dots,a_k\in \mathcal{A}$, we will refer to the values of $\varphi(a_{i_1}\cdots a_{i_n})$, $1\leq i_1,...i_{n}\leq k$, $n\geq1$, as the \textit{mixed moments} of $a_1,\dots,a_k$.

For any self-adjoint element $a\in\mathcal{A}$ there exists a unique probability measure $\mu_a$ (its spectral distribution) with the same moments as $a$, that is, $$\int_{\mathbb{R}}x^{k}\mu_a (dx)=\varphi (a^{k}), \quad \forall k\in \mathbb{N}.$$

We say that a sequence $a_n\in\mathcal{A}_n$ \emph{converges in distribution} to $a\in\mathcal{A}$ if $\mu_{a_n}$ converges in distribution to $\mu_a$. 

In this note we will only consider the $C^*$-probability spaces $(\mathcal{M}_n,\phi_1)$, where $\mathcal{M}_n$ is the set of matrices of size $n\times n$ and for a matrix $M\in\mathcal{M}_n$ the functional $\phi_{1}$ evaluated in $M$ is given by $$\phi_{1}(M)=M_{11}.$$

Let $G = (V,E,1)$ be a finite rooted graph with vertex set $\{1, ..., n\}$ and let $A_G$ be the adjacency matrix. We denote by $A(G)\subset \mathcal{M}_n$
be the adjacency algebra, i.e., the $*$-algebra generated by $A_G$. 

It is easy to see that the $k$-th moment of $A$ with respect to the $\phi_{1}$  is given the the number of walks in $G$ of size $k$ starting and ending at the vertex $1$. That is, 
$$\phi_{1}(A^k)=|\{(v_1,...,v_k): v_1=v_k=1 ~ and ~ (v_i,v_{i+1})\in E\}|.$$

Thus one can get combinatorial information of $G$ from the values of $\phi_1$ in elements of $A(G)$ and vice versa.

\section{The fourth moment lemma}

The following lemma which shows that the first, second and fourth moments are enough to ensure convergence to a Bernoulli distribution was observed in \cite{Ar1} . We give a new  proof  in terms of Jacobi parameters for the convenience of the reader.
\begin{lem}\label{lemma1}
 Let  $\left\{ X_{n}\right\} _{n\geq 1}\subset \left(\mathcal{A},\varphi \right) ,$  be a sequence of self-adjoint random variables in some non-commutative probability space, such that $\varphi(X_n)=0$  and $\varphi(X_n^2)=1$. If $\varphi\left( X_{n}^{4}\right)
\to1,$ as $n\to \infty$, then $\mu_{X_n}$ converges in distribution to a symmetric Bernoulli random variable $\mathbf{b}$.
\end{lem}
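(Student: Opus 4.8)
The plan is to invoke Proposition~\ref{WCCT}: writing $\mu_n:=\mu_{X_n}$, it suffices to prove that $G_{\mu_n}(z)\to G_{\mathbf b}(z)=\dfrac{1}{z-1/z}$ for every $z$ with $\Im(z)\ge 1$. Since $X_n=X_n^*$ has $\varphi(X_n^4)<\infty$, the measure $\mu_n$ is a probability measure whose first four moments are $m_j^{(n)}:=\varphi(X_n^j)$; in particular the truncated continued fraction \eqref{expansionjacobi} at level $n=1$ applies and reads
\[
G_{\mu_n}(z)=\cfrac{1}{z-\beta_0^{(n)}-\cfrac{\gamma_0^{(n)}}{z-\beta_1^{(n)}-\gamma_1^{(n)}G_{\nu_n}(z)}},
\]
for some probability measure $\nu_n$ (when $\gamma_1^{(n)}=0$ the summand $\gamma_1^{(n)}G_{\nu_n}(z)$ is read as $0$ and $\nu_n$ is irrelevant).

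First I would translate the hypotheses into Jacobi parameters. From $m_1^{(n)}=0$ and $m_2^{(n)}=1$ one reads off $\beta_0^{(n)}=0$, $\gamma_0^{(n)}=1$ and $P_1(x)=x$ for the monic orthogonal polynomials of $\mu_n$; the recursion then gives $\beta_1^{(n)}=m_3^{(n)}$, $P_2(x)=x^2-m_3^{(n)}x-1$, and
\[
\gamma_1^{(n)}=\frac{\|P_2\|_{L^2(\mu_n)}^2}{\|P_1\|_{L^2(\mu_n)}^2}=m_4^{(n)}-\big(m_3^{(n)}\big)^2-1.
\]
Here is the \emph{key point}: $\gamma_1^{(n)}$ is a ratio of squared $L^2(\mu_n)$-norms, hence $\gamma_1^{(n)}\ge 0$, which forces $\big(m_3^{(n)}\big)^2\le m_4^{(n)}-1$. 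Since $m_4^{(n)}\to 1$ by assumption, this yields $m_3^{(n)}\to 0$, and therefore both $\beta_1^{(n)}=m_3^{(n)}\to 0$ and $\gamma_1^{(n)}=\big(m_4^{(n)}-1\big)-\big(m_3^{(n)}\big)^2\to 0$.

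Finally I would estimate the two-level continued fraction on $H:=\{z:\Im(z)\ge 1\}$. On $H$ one has $|G_{\nu_n}(z)|\le 1$ (Proposition~\ref{WCCT}) and $\Im G_{\nu_n}(z)\le 0$; since $\gamma_1^{(n)}\ge 0$, the inner denominator $D_n(z):=z-\beta_1^{(n)}-\gamma_1^{(n)}G_{\nu_n}(z)$ satisfies $\Im D_n(z)\ge\Im(z)\ge 1$, so $|D_n(z)|\ge 1$; using $\beta_0^{(n)}=0$, $\gamma_0^{(n)}=1$ the outer denominator $E_n(z):=z-1/D_n(z)$ then has $\Im E_n(z)\ge\Im(z)\ge 1$, so $|E_n(z)|\ge 1$; the denominator $E_\infty(z):=z-1/z$ of $G_{\mathbf b}$ likewise has $|E_\infty(z)|\ge 1$. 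Hence, for $z\in H$,
\[
\big|G_{\mu_n}(z)-G_{\mathbf b}(z)\big|\le\big|E_\infty(z)-E_n(z)\big|=\Big|\tfrac1z-\tfrac1{D_n(z)}\Big|\le\big|D_n(z)-z\big|\le\big|\beta_1^{(n)}\big|+\gamma_1^{(n)},
\]
which tends to $0$ uniformly on $H$ by the previous step. Proposition~\ref{WCCT} then gives $\mu_n\to\mathbf b$ weakly, as desired.

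The only genuinely delicate step is the second one: extracting from the single scalar condition $m_4^{(n)}\to 1$ the simultaneous vanishing of both $\beta_1^{(n)}$ and $\gamma_1^{(n)}$. The mechanism is the nonnegativity of the Jacobi parameter $\gamma_1^{(n)}$, i.e.\ the elementary inequality $m_4^{(n)}\ge 1+\big(m_3^{(n)}\big)^2$ valid for every mean-zero, variance-one probability measure; everything else — the moment/Jacobi dictionary, the bounds on the two-level continued fraction, and the degenerate case $\gamma_1^{(n)}=0$ — is routine.
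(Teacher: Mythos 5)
Your proposal is correct and follows essentially the same route as the paper: translate $m_1=0$, $m_2=1$ into $\beta_0=0$, $\gamma_0=1$, use the identity $m_4=\beta_1^2+1+\gamma_1$ (your version $\beta_1=m_3$, $\gamma_1=m_4-m_3^2-1$ is the same relation) together with $\gamma_1\ge 0$ to force $\beta_1,\gamma_1\to 0$, and then pass to the limit in the level-one truncated continued fraction for $G_{\mu_n}$ using the bound $|G_{\nu_n}(z)|\le 1$ on $\{\Im(z)\ge 1\}$ and Proposition \ref{WCCT}. Your explicit lower bounds on the denominators make the last step slightly more quantitative than the paper's, but the argument is the same.
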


\begin{proof}
Let $\left( \left\{ \gamma _{i}\left( \mu _{X_{n}}\right) \right\} ,\left\{
\beta _{i}\left( \mu _{X_{n}}\right) \right\} \right) $ be the Jacobi parameters of the measures $\mu _{X_{n}}.$ The first moments $\{m_n\}_{n\geq1}$ are given in terms of the Jacobi Parameters as follows, see \cite{AcBo}.
\begin{eqnarray*}
m_{1} &=&\beta _{0} \\
m_{2} &=&\beta _{0}^{2}+\gamma _{0} \\
m_{3} &=&\beta _{0}^{3}+2\gamma _{0}\gamma _{0}+\beta _{1}\gamma _{0} \\
m_{4} &=&\beta _{0}^{4}+3\beta _{0}^{2}\beta _{1}+2\beta _{1}\beta
_{0}\gamma _{0}+\beta _{1}^{2}\gamma _{0}+\gamma _{0}^{2}+\gamma _{0}\gamma
_{1}.
\end{eqnarray*}%
Since $m_{1}\left( \mu _{X_{n}}\right) =0$ and $m_{2}\left( \mu
_{X_{n}}\right) =1$ we have
\begin{equation*}
\beta _{0}\left( \mu _{X_{n}}\right) =0\ \ \ \text{and\ \ }\ \gamma
_{0}\left( \mu _{X_{n}}\right) =1\ \ \ \ \ \forall n\geq 1,
\end{equation*}%
Hence, 
\begin{equation}\label{jpineq}
m_{4}\left( \mu _{X_{n}}\right) =\beta _{1}^{2}\left( \mu _{X_{n}}\right)
+1+\gamma _{1}\left( \mu _{X_{n}}\right) .
\end{equation}%
Now, since $m_{4}\left( \mu _{X_{n}}\right)1$ and $\gamma _{1}\geq 0$ we have the convergence
\begin{equation*}
\beta _{1}\left( \mu _{X_{n}}\right) \underset{n\rightarrow \infty }{%
\to}0\ \ \ \text{and\ \ \ }\gamma_{1}\left( \mu _{X_{n}}\right) 
\underset{n\rightarrow \infty }{\to }0.
\end{equation*}
Let $G_{\mu_n}$ be the Cauchy transform of $\mu_n $. By (\ref{expansionjacobi}) we can expand $G_\mu$ as a continued fraction as follows 
$$G_{\mu_n }(z)=\frac{1}{z-\dfrac{1}{ z-\beta _{1}-\gamma _{1}G_{\nu_n }(z)}} $$
where $\nu_n$ is some probability measure. Now, recall that $|G_{\nu_n }(z)|$ is bounded by $1$ in the set $\{z\vert ;\Im(z)\geq 1\}$ and thus, since $\gamma_1\rightarrow0$ and $\beta_1\to0$ we see that $\gamma _{n}G_{\nu_n }(z)\rightarrow 0 $. This implies the point-wise convergence
$$G_{\mu_n }(z)\to\frac{1}{z-\dfrac{1}{ z}}$$
 in the set $\{z\vert ;\Im(z)\geq 1\}$, which then implies the weakly convergence $\mu_n\to\mathbf{b}$.
\end{proof}

From the proof of the previous lemma one can give a quantitative version in terms of the distance given in eq (\ref{distance}).

\begin{prop}
Let $\mu$ be a probability measure such that $m_4:=m_4(\mu)$ is finite.  Then 

\begin{equation*}
d(\mu,\frac{1}{2}\delta_1+\frac{1}{2}\delta_{-1})\leq 4\sqrt{ m_4-1}.
\end{equation*}
\end{prop}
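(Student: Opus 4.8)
The plan is to track the constants through the proof of Lemma~\ref{lemma1} carefully. As in that proof, from $m_1(\mu)=0$ and $m_2(\mu)=1$ we get $\beta_0=0$, $\gamma_0=1$, and from equation~(\ref{jpineq}), $\beta_1^2+\gamma_1 = m_4-1 =: \varepsilon$. Since $\gamma_1\ge 0$, this immediately gives the bounds $\gamma_1\le\varepsilon$ and $\beta_1^2\le\varepsilon$, i.e.\ $|\beta_1|\le\sqrt{\varepsilon}$. (Implicitly one needs $m_4\ge 1$, which follows from Jensen applied to $m_2=1$, so $\varepsilon\ge 0$ and the square roots make sense; if $\mu$ has fewer than $2$ atoms we are in the degenerate case and can handle it separately, but the bound still holds trivially since then $\mu$ is a point mass and $d$ is at most $\ldots$ — actually it is cleanest to just assume enough moments to run the level-1 orthogonalization, as the statement does.)

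Next I would write, using equation~(\ref{expansionjacobi}) at level $n=1$,
\begin{equation*}
G_\mu(z)=\cfrac{1}{z-\cfrac{1}{z-\beta_1-\gamma_1 G_\nu(z)}},
\qquad
G_{\mathbf b}(z)=\cfrac{1}{z-\cfrac{1}{z}},
\end{equation*}
for some probability measure $\nu$, and estimate the difference on the region $\Im(z)\ge 1$. The key quantitative inputs are all from Proposition~\ref{WCCT}: on $\{\Im(z)\ge1\}$ one has $|G_\mu(z)|\le 1$, $|G_{\mathbf b}(z)|\le 1$, $|G_\nu(z)|\le 1$, and also $\Im(z-1/z)\ge 1$ hence $|1/(z-1/z)|\le 1$; more generally any Cauchy transform $G_\rho$ satisfies $\Im(1/G_\rho(z))\le -\Im(z)\le -1$ on this region, which is what lets one control the reciprocals appearing in the continued fraction without them blowing up. I would denote $w := z-\beta_1-\gamma_1 G_\nu(z)$ and $w_0 := z$ (the two "inner" denominators), note $|w-w_0| = |\beta_1+\gamma_1 G_\nu(z)| \le |\beta_1| + \gamma_1 \le \sqrt{\varepsilon}+\varepsilon$, and that $\Im(w)\le -1$ and $\Im(w_0)\le -1$ so $|1/w|\le 1$, $|1/w_0|\le1$; therefore $|1/w - 1/w_0| = |w-w_0|/(|w||w_0|) \le |w-w_0| \le \sqrt\varepsilon+\varepsilon$. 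Then with $u := z-1/w$ and $u_0 := z-1/w_0$ we again have $\Im(u),\Im(u_0)\le -1$, so $|G_\mu(z)-G_{\mathbf b}(z)| = |1/u-1/u_0| \le |u-u_0| = |1/w-1/w_0| \le \sqrt\varepsilon+\varepsilon$.

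This already gives $d(\mu,\mathbf b)\le \sqrt\varepsilon+\varepsilon$, which is weaker than the claimed $4\sqrt\varepsilon$ only when $\varepsilon$ is large; when $\varepsilon \le 9$, say, $\sqrt\varepsilon+\varepsilon \le 4\sqrt\varepsilon$, and when $\varepsilon\ge 9$ the bound $4\sqrt\varepsilon\ge 12 \ge 2$ holds trivially since $d(\mu_1,\mu_2)\le |G_{\mu_1}|+|G_{\mu_2}|\le 2$ always. So a two-case split closes the gap, or one sharpens the middle estimate: the main place slack enters is in $|1/w-1/w_0|\le |w-w_0|$, where using $\Im(w)\le-1$ one can do a touch better, and in bounding $|\beta_1|+\gamma_1$ by something involving a single $\sqrt\varepsilon$ via $|\beta_1|+\gamma_1 \le \sqrt{\beta_1^2+\gamma_1}\cdot(\text{something})$ — but honestly the cleanest route to exactly $4\sqrt\varepsilon$ is the case analysis above. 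The main obstacle, such as it is, is purely bookkeeping: making sure every reciprocal in the continued fraction is taken at a point with imaginary part $\le -1$ (so the elementary inequality $|1/a-1/b|\le|a-b|$ applies) and that the perturbation $\gamma_1 G_\nu$ never pushes $w$ out of the lower half-plane far enough to spoil this — which is exactly guaranteed because adding $-\gamma_1 G_\nu(z)$ with $\gamma_1\ge0$ and $\Im G_\nu\le 0$ only makes $\Im(w)$ more negative. There is no analytic subtlety beyond Proposition~\ref{WCCT}; the content is the Jacobi-parameter identity $\beta_1^2+\gamma_1=m_4-1$ together with these uniform bounds.
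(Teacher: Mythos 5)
Your argument is correct and follows essentially the same route as the paper: a perturbative estimate of the level-one continued fraction using $|\beta_1|\le\sqrt{m_4-1}$, $\gamma_1\le m_4-1$ and $|G_\nu|\le1$ on $\{\Im z\ge1\}$, combined with a trivial bound when $m_4-1$ is large; the paper merely packages the difference as $f(z)/\bigl((z^2-1)(z^2-1-f(z)z)\bigr)$ with $f=\beta_1+\gamma_1G_\nu$ and splits cases at $m_4-1>1/16$ rather than at $9$. One sign slip worth fixing: since $G_\nu$ maps $\mathbb{C}^+$ into $\mathbb{C}^-$, the denominators $w=z-\beta_1-\gamma_1G_\nu(z)$ and $u=z-1/w$ satisfy $\Im\ge1$ (not $\le-1$) when $\Im z\ge1$, but as only $|\Im|\ge1$ is used in the estimates $|1/a-1/b|\le|a-b|$, nothing in the argument breaks.
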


\begin{proof}

If $m_4-1>1/16$  then the statement is trivial since $d(\mu,1/2\delta_1+1/2\delta_{-1})\leq1$ for any  measure $\mu$. Thus we may assume that $(m_4-1)\leq1/16$.

Denoting by $f(z)=\beta _{1}-\gamma _{1}G_{\nu_n }(z)$  we have
$$|G_\mu(z)-G_b(z)|=
\left| \frac{1}{z-\dfrac{1}{ z}}- \frac{1}{z-\dfrac{1}{ z-f(z)}}\right|=\left|\frac{f(z)}{(z^2-1)(z^2-1-f(z)z)}\right|.$$

From (\ref{jpineq}) we get the inequalities  $\sqrt{m_4-1}\geq|\beta _{1}|$ and $\sqrt{m_4-1}\geq m_4-1\geq \gamma_1$. Since, for $Im(z)>1$, we have that, $|G_{\nu }(z)|<1$  we see that $|f(z)|=|\beta _{1}-\gamma _{1}G_{\nu}(z)|\leq 2\sqrt{m_4-1}\leq1/2$., from where we can easily obtain the bound $ |\frac{1}{z^2-1-f(z)z}|\leq2$.
Also, for $Im(z)>0$ we have the  bound $|\frac{1}{(z^2-1)}|<1$. Thus we have 

\begin{eqnarray}
|G_\mu(z)-G_b(z)|&=&\left|\frac{f(z)}{(z^2-1)(z^2-1-f(z)z)}\right|
\\
&=&\left|f(z)\right|\left|\frac{1}{(z^2-1)} \right| \left|\frac{1}{z^2-1-f(z)z}\right|
\\
&\leq& 2 |f(z)|\leq4\sqrt{m_4-1}.
\end{eqnarray}
as desired.

\end{proof}

\section{Proof of Theorem \ref{T1}.}

 Before proving Theorem \ref{T1} we will prove a lemma about the structure of distance $k$-graph of the iterated  star product of a graph.

\begin{lem}\label{descomp}
Let $G=(V,E,e)$  be a connected finite graph with root $e$ and $k$ such that $G^{[k]}$ is  a non-trivial graph.
 Let ${G^{\star N} }^{[k]}$ be the distance $k$-graph of the N-th star product of G,  then 
${G^{\star N} }^{[k]}$ admits a decomposition of the form.

$${G^{\star N} }^{[k]}=({G}^{[k]})^{\star N}\cup \hat G.$$

where $\partial G (z, e) < k$, for all $z\in \hat G$:

\end{lem}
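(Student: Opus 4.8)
The plan is to work throughout inside the connected component of the root of $G^{\star N}$, which is all that matters for the vacuum state, and to use the fact that this component is a \emph{one-point union} of $N$ copies of $G$. Writing $G^{\star N}=G^{\star(N-1)}\star G$ and unwinding the definition of the star product, an easy induction on $N$ shows that the component of the root $\mathbf{e}=(e,\dots,e)$ in $G^{\star N}$ is obtained by gluing $N$ disjoint copies $G_1,\dots,G_N$ of $G$ along their roots, all identified with the single vertex $\mathbf{e}$. In particular every vertex $\ne\mathbf{e}$ lies in a unique copy, and $\mathbf{e}$ is a cut vertex: any walk joining two vertices of different copies passes through $\mathbf{e}$. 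Applying the same statement to $G^{[k]}$ shows that the root component of $(G^{[k]})^{\star N}$ is the one-point union of $N$ copies of $G^{[k]}$, whose edge set consists exactly of the pairs of vertices lying in a common copy and at $\partial_G$-distance $k$ (edges incident to $\mathbf{e}$ included).

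Next I would compute distances in $G^{\star N}$. Using that $\mathbf{e}$ is a cut vertex together with the triangle inequality in $G$, one gets: (a) $\partial_{G^{\star N}}(z,\mathbf{e})=\partial_G(z,e)$ for $z$ in any copy; (b) $\partial_{G^{\star N}}(x,y)=\partial_G(x,y)$ when $x,y$ lie in a common copy, since a path leaving that copy must cross $\mathbf{e}$ twice and hence has length at least $\partial_G(x,e)+\partial_G(e,y)\ge\partial_G(x,y)$; and (c) $\partial_{G^{\star N}}(x,y)=\partial_G(x,e)+\partial_G(y,e)$ when $x\in G_i\setminus\{\mathbf{e}\}$ and $y\in G_j\setminus\{\mathbf{e}\}$ with $i\ne j$, obtained by splitting any $x$--$y$ path at a visit to $\mathbf{e}$ and using (a).

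Then I would assemble the decomposition. By (b) and (c), an edge $\{x,y\}$ of $(G^{\star N})^{[k]}$ is of exactly one of two types: (i) $x,y$ lie in a common copy with $\partial_G(x,y)=k$, which by the first step are precisely the edges of $(G^{[k]})^{\star N}$; or (ii) $x\in G_i\setminus\{\mathbf{e}\}$, $y\in G_j\setminus\{\mathbf{e}\}$ with $i\ne j$ and $\partial_G(x,e)+\partial_G(y,e)=k$. Let $\hat G$ be the graph formed by the type-(ii) edges. These two families are disjoint (type (ii) has no edge incident to $\mathbf{e}$ and no edge inside a single copy), so $(G^{\star N})^{[k]}=(G^{[k]})^{\star N}\cup\hat G$. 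Finally, if $\{x,y\}\in\hat G$ then $\partial_G(x,e)\ge1$ and $\partial_G(y,e)\ge1$ because $x,y\ne\mathbf{e}$, so from $\partial_G(x,e)+\partial_G(y,e)=k$ each summand is at most $k-1$; hence, using (a), $\partial_{G^{\star N}}(z,\mathbf{e})<k$ for every vertex $z$ incident to an edge of $\hat G$, which is the assertion. The hypothesis that $G^{[k]}$ is non-trivial is only used to keep the picture non-degenerate and is not needed for the identity of edge sets.

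The only genuinely non-formal point is the distance computation: one must rule out that a shortest path in the one-point union takes a profitable shortcut through another copy, which is exactly where the cut-vertex property of $\mathbf{e}$ and the triangle inequality in $G$ enter. Everything else is a direct unwinding of the definitions of the distance $k$-graph and of the star product.
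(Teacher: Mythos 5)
Your proposal is correct and follows essentially the same route as the paper: split the edges of $(G^{\star N})^{[k]}$ into those joining vertices of a common copy (which give exactly $(G^{[k]})^{\star N}$, since distances within a copy are unchanged) and those joining distinct copies (where $\partial_{G^{\star N}}(x,y)=\partial_G(x,e)+\partial_G(y,e)$ forces both endpoints to lie at distance $<k$ from $e$). The only difference is that you spell out the cut-vertex justification of the two distance formulas, which the paper asserts without proof.
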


\begin{proof}
Let $G_{1},\ G_{2},\ \dots ,$ $G_{N}$ be the $N$ copies of $G,$ that form
the star product graph $G^{\star N}$ by gluing them at $e.$ For $x,
y\in G_{i},$ the distance between $x$ and $y$ is given by
\[
\partial _{G^{\star N}}\left( x,y\right) =\partial _{G_{i}}\left( x,y\right)
=\partial _{G}\left( x,y\right) ,
\]
hence
\[
\left( x,y\right) \in E\left( G_i^{\left[ k\right] }\right) \ \text{if and
only if\ }\left( x,y\right) \in E\left( \left( G^{\star N}\right) ^{\left[ k%
\right] }\right) ,
\]
therefore we have $\left( G^{\left[ k\right] }\right) ^{\star N}\subseteq
\left( G^{\star N}\right) ^{\left[ k\right] }.$

Now, if $x\in G_{i}$ and $y\in G_{j}$ with $j\neq i,$ by definition all the
paths in $G^{\star N}$ from $x$ to $y$ must pass throw $e,$ then we have
\[
\partial _{G^{\star N}}\left( x,y\right) =\partial _{G_{i}}\left( x,e\right)
+\partial _{G_{j}}\left( y,e\right) ,
\]
thus
\[
\left( x,y\right) \in E\left( \left( G^{\star N}\right) ^{\left[ k\right]
}\right) \ \text{if and only if }\partial _{G_{i}}\left( x,e\right)
+\partial _{G_{j}}\left( y,e\right) =k.
\]
Since $\partial _{G_{i}}\left( x,e\right) ,\ \partial _{G_{j}}\left(
y,e\right) >0,$ we obtain the desired result.
\end{proof}

Now, we are in position to prove the main theorem of the paper. 

\begin{proof}[Proof  of Theorem \ref{T1}]

 Consider the non-commutative probability space $\left( \mathcal{A}
,\phi _{1}\right) $ with $\phi _{1}\left( M\right) =M_{11},$ for $M\in 
\mathcal{A}$ (see Section 2). Then, recall that,  if $ A $ is an adjacency matrix, $
\phi _{1}\left( A^{k}\right) $ equals the number of walks of  size 
$k$ starting and ending at the vertex  $1$.

Since $G$ is a simple graph, it has no loops and then  $G^{\star N}$ is also a simple graph. Thus, 
\begin{equation*}
\phi _{1}\left( \frac{A^{\left[ \star N,k\right] }}{\sqrt{N|V_{e}^{\left[
k\right] }|}}\right) =0.
\end{equation*}
Now, observe that since the graph $G^{\star N}$ has no loops,  the only walks in $G$ of size $2$ which start in $e$ and end in $e$
are of the form  $\left( exe\right)$, where $x$ is a neighbor of $e$ in  $\left(
G^{\star N}\right) ^{\left[ k\right] }$ .  The number of neighbors of $e$ is exactly $N|V_{e}^{\left[ k\right] }|$, thus 

\begin{eqnarray*}
\phi _{1}\left( \left( \frac{A^{\left[ \star N,k\right] }}{\sqrt{N|V_{e}^{
\left[ k\right] }|}}\right) ^{2}\right) &=&\frac{1}{N|V_{e}^{\left[ k\right]
}|}\phi _{1}\left( \left( A^{\left[ \star N,k\right] }\right) ^{2}\right)
\\
&=&\frac{1}{N|V_{e}^{\left[ k\right] }|}N|V_{e}^{\left[ k\right] }|=1.
\end{eqnarray*}

Thus we have seen that $\phi(A_N)=0$  and  $\phi(A_N^2)=1$.  Hence, it remains to  show that  $\phi(A_N^4)\to1$ as $N\to \infty$.

We are interested in counting the number of walk of size $4$ that start and finish at $e$ in $\left( G^{\star N}\right) ^{\left[ k\right] }.$  We will divide this walks in two types.

\begin{figure}[here]
\begin{center}
\includegraphics[height=3cm]{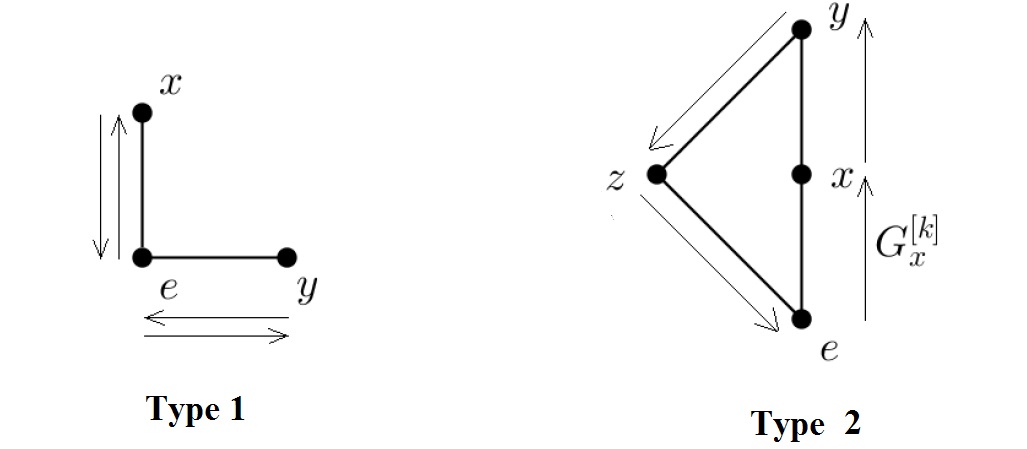}
\caption{\label{fig3} Types of walks of size 4}
\end{center}
\end{figure}

Type 1.  The first type of walk is of the form $exeye$. That is, the walk starts at $e$, then visits a neighbor $x$ of $e$ to then come back to $e$, this can be done in $N|V_{e}^{\left[ k\right] }|$ ways. After this, he again visits a a neighbour $y$ (which could be again $x$) of $e$ to finally come back to $e$. Again, this second step can be done in $N|V_{e}^{\left[ k\right] }|$ different ways , so there is $\left(
N|V_{e}^{\left[ k\right] }|\right) ^{2}$ walks of this type. Thus 
\begin{equation*}
1=\frac{\left( N|V_{e}^{\left[ k\right] }|\right) ^{2}}{\left( N|V_{e}^{%
\left[ k\right] }|\right) ^{2}}\leq \phi _{1}\left( \left( \frac{A^{\left[
\star N,k\right] }}{\sqrt{N|V_{e}^{\left[ k\right] }|}}\right) ^{4}\right) .
\end{equation*}

Type 2. Let $G_{x}^{\left[ k\right] }$ be the copy of  $G^{\left[ k\right] }$ in the distance-$k$  graph of the 
star product $\left( G^{\left[ k\right] }\right) ^{\star N}$ which contains $x$. The second type of walks is as follows. From $e$ is goes to some 
$x\in V_{e}^{\left[ k\right] }\ $ (which can be chosen in  $N|V_{e}^{\left[k\right] }|$ different ways),  and then from $x$ then he goes to some $y \neq e$. This $y$ should belong to $G_{x}^{
\left[ k\right] }\ $. Indeed, since $\delta _{\left( G^{\star N}\right)
}\left( e,x\right) =k,$ if $y$ would be in another copy of $G^{\left[
k\right] }$ the distance $\delta _{\left( G^{\star N}\right) }\left(
y,x\right) ,$ between $y$ and $x$ would be bigger than $k.$  The number of ways of choosing $y$ is bounded by the number of neighbours of $x$.

For the next step of the walk, from $y$ we can only go to a neighbor of $e$, say  $z\in V_{e}^{\left[ k\right] }$\ (since in the last step it must come back to $e$).  This $z$ indeed must also belong to $G_{x}^{\left[ k\right] },$. If this wouldn't be the case and $z\notin G_{x}^{\left[ k\right] }$, then we would have that $%
\delta _{\left( G^{\star N}\right) }\left( e,z\right) \neq k,$  which is a contradiction because of Lemma  \ref{descomp}.

\begin{figure}[here]
\begin{center}
\includegraphics[height=3.5cm]{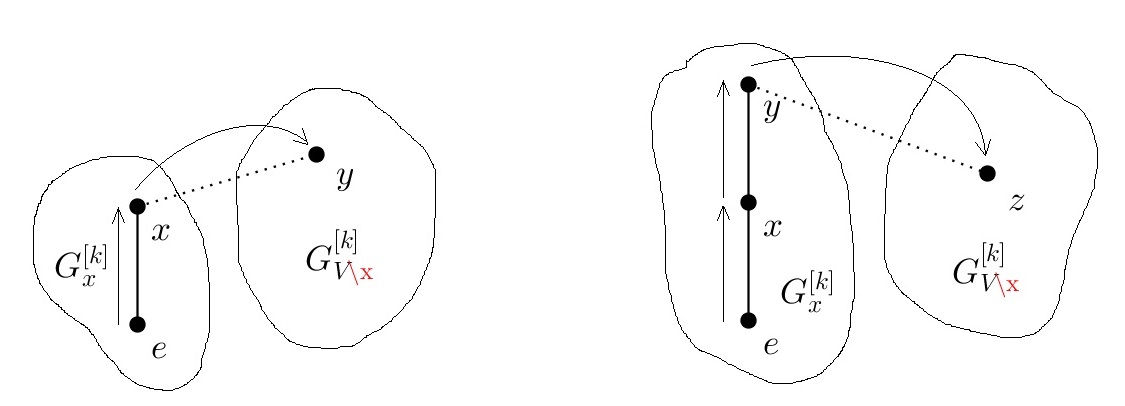}
\caption{\label{fig3} Obstructions}
\end{center}
\end{figure}

Finally, let $M=\max_{x\in V}|V_{x}^{
\left[ k\right] }|$. Then, from the above considerations we see that the number of  walks of Type 2  is bounded by  $ M \left( N|V_{e}^{\left[ k\right] }\right)  \left(
|V_{e}^{\left[ k\right] }|\right) ,$ from where 
\begin{eqnarray*}
\phi _{1}\left( \left( \frac{A^{\left[ \star N,k\right] }}{\sqrt{N|V_{e}^{%
\left[ k\right] }|}}\right) ^{4}\right) &\leq &\frac{\left( N|V_{e}^{\left[ k%
\right] }|\right) ^{2}}{\left( N|V_{e}^{\left[ k\right] }|\right) ^{2}}+%
\frac{N|V_{e}^{\left[ k\right] }|M|V_{e}^{\left[ k\right] }|}{\left(
N|V_{e}^{\left[ k\right] }|\right) ^{2}} \\
&=&1+\frac{M}{N}\underset{N\rightarrow \infty }{\longrightarrow }1,
\end{eqnarray*}%
since  $M$ does not depend on $N.$ Thanks to  Lemma \ref{lemma1} we obtain the desired result.
\end{proof}

\end{document}